\newtheorem{theorem}{Theorem}[section]
\newtheorem{proposition}[theorem]{Proposition}
\newtheorem{definition}[theorem]{Definition}
\newtheorem{example}[theorem]{Example}
\newtheorem{remark}[theorem]{Remark}
\newenvironment{proof}{{\noindent\bf Proof.}}{\hfill $\Box$\par\vskip3mm}
\newcommand{\Hom}{{\rm Hom}}
\newcommand{\Cc}{\mathcal{C}}
\newcommand{\Dd}{\mathcal{D}}
\newcommand{\Mm}{\mathcal{M}}
\def\-1{_{(-1)}}
\def\0{_{(0)}}
\def\1{_{(1)}}
\def\2{_{(2)}}
\def\3{_{(3)}}
\def\ZZ{{\mathbb Z}}
\begin{document}

\author{Miodrag Cristian Iovanov \small \\  Department of Algebra, University of Bucharest \\ Academiei 14, Bucharest, Romania \\and\\ State University of New York @ Buffalo, \\244 Mathematics Building, Buffalo, NY 14260 \vspace{.5cm} \\ Lars Kadison \small \\
Department of Mathematics, University of Pennsylvania \\ David Rittenhouse Lab, 209 S. 33rd St., Philadelphia, PA 19104 }

\title{When  weak Hopf algebras are Frobenius}
%\title{Frobenius-type properties of weak quasi-Hopf algebras}

\maketitle

\begin{abstract}
We investigate when a weak Hopf algebra $H$ is Frobenius; we show this is not always true, but it is true if the semisimple base algebra $A$ has all its matrix blocks of the same dimension. However, if $A$ is a semisimple algebra not having this property, there is a weak Hopf algebra $H$ with base $A$ which is not Frobenius (and consequently, it is not Frobenius ``over'' $A$ either). We give, moreover, a categorical counterpart of the result that a Hopf algebra is a Frobenius algebra for a noncoassociative generalization of  weak Hopf algebra.
\footnote{The first author was partially supported by the contract nr. 24/28.09.07 with UEFISCU "Groups, quantum groups, corings and representation theory" of CNCIS, PN II (ID\_1002)\\
{2000 \textit{Mathematics Subject Classification}. Primary 18D10; Secondary 16W30, 16S50, 16D90, 16L30}\\
{\bf Keywords} weak Hopf algebra, Frobenius algebra, quasi-Frobenius algebra, tensor category, Tannakian reconstruction, quasi-Hopf algebra}
\end{abstract}
%$^1$}

%\thanks{2000 \textit{Mathematics Subject Classification}. Primary 16W30;
%Secondary 13D30, 18E40, 16S90, 16W30, 16G99, (16L30)}
%\thanks{$^*$ This paper was written partially within the frame of the bilateral Flemish-Romanian project "New Techniques in Hopf Algebra Theory and Graded Rings"}
\date{}
%\keywords{finite tensor category, tensor functor, weak Hopf and weak quasi-Hopf algebras,  Frobenius and quasi-Frobenius algebras, socle, %Frobenius-Perron dimension}

%\pagestyle{myheadings}

\section*{Introduction}
%\noindent

Quasi-Hopf algebras are objects generalizing Hopf algebras, which were introduced in 1990 by Drinfeld. They are associative algebras $H$, having also a coalgebra structure which is only coassociative up to conjugation by a nonabelian 3-cocycle and, together with an antipode $S:H\rightarrow H$ satisfy appropriate compatibility conditions. Weak Hopf algebras were  introduced and  investigated by several authors (\cite{BoSz, BNS, N, S})
and are of great interest in physics (e.g. \cite{BoSz, FFRS}). 

Finite dimensional Hopf algebras generalize group algebras of finite groups in many ways, and one of the interesting similar features they have is that they are Frobenius algebras. This property is also preserved in the infinite dimensional case, in the sense that an infinite dimensional Hopf algebra having a nondegenerate integral is co-Frobenius (as a coalgebra). It is then natural to investigate whether the property of being a Frobenius algebra is preserved for generalizations of Hopf algebras. This was shown to be true for finite dimensional quasi-Hopf algebras; even infinite dimensional co-quasi Hopf algebras are co-Frobenius as shown in \cite{BC}.

 Also the paper \cite{V} seems to show that  finite dimensional weak Hopf algebras are Frobenius algebras. This was done there as a consequence of the integral theory the author develops for weak Hopf algebras. However, G.~B\"ohm noticed a gap in one of the key ingredients used for proving this fact on \cite[p. 485]{V}. But it remains true that weak Hopf algebras are quasi-Frobenius, which was proved in \cite{V} and in \cite{BNS}. 

\vspace{.5cm}

In this note, we address this question by using the categorical approach and the more general language of finite tensor categories. This seems an appropriate approach, since the quasi-Frobenius property of an algebra is a Morita invariant property, and the Frobenius property is easily understood as well in terms of dimensions inside the category. This also allows including all the above mentioned finite dimensional structures (Hopf algebras, quasi-Hopf algebras) and addresses also the more general weak quasi-Hopf algebras. We show that a (finite dimensional) weak (or weak quasi) Hopf algebra $H$ is Frobenius if the dimensions of the matrix blocks of its base algebra $A$ are all equal. In particular, this recovers the well known results for Hopf and quasi-Hopf algebras. We also show that this is the best possible result, for if $A$ is a  separable algebra over an algebraically closed field, and the dimensions of the matrix blocks of $A$ are not all equal, then there is a weak Hopf algebra with base algebra $A$ which is not Frobenius. This is shown by constructing a tensor category $\Cc$ together with a tensor functor $F:\Cc\rightarrow {\rm Bimod}(A)$ into the category of $A$-bimodules (where $A$ is semisimple); then, applying general Tannakian reconstruction theory, we find the existence of a corresponding weak Hopf algebra which is not Frobenius.

%$\dots$

\section{Preliminaries}

Throughout this note, $K$ will be an algebraically closed field of characteristic $0$. We recall the following definition, for example, from \cite{BoSz}, 

\begin{definition}
A weak Hopf algebra is a $K$-vector space $H$ which is both an associative algebra $(H,m,1)$ and a coassociative coalgebra $(H,\Delta,\varepsilon)$ together with an antipode $S:H\rightarrow H$ such that $\Delta:H\rightarrow H\otimes_K H$ is a morphism of algebras and\\
(i) $\varepsilon(ab\1)\varepsilon(b\2c)=\varepsilon(abc)=\varepsilon(ab\2)\varepsilon(b\1c)$;\\
(ii) $1\1\otimes_K 1\2{1'}\1 \otimes {1'}\2 =1\1 \otimes 1\2 \otimes 1\3= 1\1 \otimes {1'}\1 1\2 \otimes {1'}\2$, where $1'=1$ is a ``copy'' of 1;\\
(iii) $a\1S(a\2)=\varepsilon(1\1a)1\2$;\\
(iv) $S(a\1)a\2 = 1\1\varepsilon(a1\2)$;\\
(v) $S(a\1)a\2S(a\3)=S(a)$.
\end{definition}

\vspace{.5cm}

If the counit is an algebra homomorphism,
it is in fact just a Hopf algebra.  

Also recall the definition of quasi-Hopf algebra \cite{D};
examples of which come from bialgebras with their comultiplication conjugated by a gauge element \cite{D}.
\begin{definition}A quasi-bialgebra $H$ is an
algebra with additional 
structure $(H, \Delta, \varepsilon, \Phi)$ where  $\Delta: H \rightarrow H \otimes H$  is an algebra homomorphism
and noncoassociative coproduct
with counit augmentation $\varepsilon: H \rightarrow k$ satisfying the ordinary counit laws, $(\varepsilon \otimes \mbox{Id})\Delta= \mbox{Id} = (\mbox{Id} \otimes \varepsilon)\Delta.$
$\Phi$ is an invertible element in $H \otimes H \otimes H$ denoted by 
$\Phi = X^1 \otimes X^2 \otimes X^3 $
with inverse denoted by
$\Phi^{-1} = x^1 \otimes x^2 \otimes x^3 $
which controls the noncoassocativity of the coproduct on $H$ as follows:
\begin{equation}
\label{eq: quasi-coassoc}
(\mbox{Id} \otimes \Delta)(\Delta(a)) = \Phi (\Delta\otimes \mbox{Id})(\Delta(a)) \Phi^{-1}
\end{equation}
Moreover $\Phi$ must satisfy normalized $3$-cocycle equations given by: 
\begin{equation}
\label{eq: 3-cocycle}
(1 \otimes \Phi)(\mbox{Id} \otimes \Delta\otimes \mbox{Id})(\Phi)(\Phi \otimes 1) = 
(\mbox{Id} \otimes \mbox{Id} \otimes \Delta)(\Phi)(\Delta\otimes \mbox{Id}  \otimes \mbox{Id})(\Phi)
\end{equation}
\begin{equation}
\label{eq: normal}
(\mbox{Id} \otimes \varepsilon \otimes \mbox{Id})(\Phi) = 1 \otimes 1.
\end{equation}
A quasi-bialgebra $H$ is  a \textit{quasi-Hopf algebra} if there is an anti-automorphism $S: H \to H$, called
an \textit{antipode}, with elements
$\alpha, \beta \in H$ such that $
S(a\1) \alpha a\2  =  \varepsilon(a) \alpha$,
$a\1 \beta S(a\2)    =  \varepsilon(a) \beta$ for all $a \in A$; moreover, $X^1 \beta S(X^2) \alpha X^3 =  1$, and  
$S(x^1) \alpha x^2 \beta S(x^3)  =  1$
\end{definition} 

It was noted in \cite{BoSz} that the category of finite dimensional representations of a weak Hopf algebra $H$ is a monoidal category, and moreover, it has a rigid structure, that is, it has left and right duals of every object, satisfying appropriate axioms. This has motivated the introduction of finite tensor categories. We recall the following definitions; we refer the reader to \cite{BK} or \cite{CE} for technical details. 

\begin{definition}
(i) An abelian category $\Cc$ is called {\it finite} if there are only finitely many isomorphism types of simples, each of which has a projective cover,  objects have finite length and the Hom-spaces are finite dimensional. (Note: This is known to be equivalent to $\Cc\simeq {\rm Rep} A$ for a finite dimensional algebra $A$).\\
(ii) A category $\Cc$, which is $K$-linear abelian monoidal, rigid, all objects have finite length and the Hom spaces are finite dimensional vector spaces over $K$ is called a finite multitensor category; if ${\mathbf 1}$ is simple then this is called a finite tensor category. \\
(iii) A functor $F:\Cc\rightarrow \Dd$ between two monoidal categories is called quasi-tensor if there exist a natural isomorphism $\theta:F(X\otimes Y)\cong F(X)\otimes F(Y)$ for all $X,Y\in\Cc$, and $F(1_\Cc)\cong1_\Dd$. This is called a tensor functor if the isomorphism $\theta$ agrees with the associativity and unit isomorphisms of $\Cc$ and $\Dd$. 
\end{definition}

\vspace{.5cm}

It has been proven in \cite{EO} (see also \cite{HO}) that any finite tensor category is equivalent to the representation theory of a weak quasi-Hopf algebra: we refer to \cite{MS} for the technically precise definition of this generalization of weak Hopf algebra
and quasi-Hopf algebra not needed in this paper (although  the counit is not required to be an algebra homomorphism). This is done as follows: one first constructs a quasi-tensor functor $F:\Cc\rightarrow {\rm Bimod}(A)$ to the category of finite dimensional $A$-bimodules of a semisimple algebra $A$. Then, it is shown that $\Cc$ is equivalent to the representation category of a certain weak quasi-Hopf algebra structure on ${\rm End}_K(F)$
(which is a weak Hopf algebra if $F$ is a tensor functor). \\
Conversely, if $\Cc={\rm Rep}\,H$ - the finite dimensional representations (left modules) of $H$ - for a weak quasi-Hopf algebra $H$, one defines $A=A_L=\{ \varepsilon(1\1a)1\2, a \in A \}$ and $A_R= \{ 1\1\varepsilon(a1\2), a \in A \}$; it turns out that $A_R\simeq (A_L)^{op}$ via the antipode, $A_L$ and $A_R$ are semisimple and commute with each other; the algebra $A$ is called the base algebra of $H$. (The unit object
$A$ in $H$-Mod, with action via projection formula implicit above, is simple, or irreducible, in many cases; e.g. for $H$ a connected groupoid algebra.) There exists a functor $F:\Cc\rightarrow A\otimes A^{op}\!-\!\mbox{Mod}=\mbox{\rm Bimod}(A)$, which is a quasi-tensor functor. Moreover, if the algebra $H$ is a weak Hopf algebra, then $F$ is a tensor functor.

\vspace{.5cm}

In order to distinguish from the categorical duals coming from the rigidity axioms of a tensor category, we will use $V^\vee$ to denote the $K$-dual of the $K$-vector space $V$. 

\begin{definition} 
A finite dimensional algebra is called quasi-Frobenius if every (finitely generated) projective left (or, equivalently, right) $A$-module is injective, equivalently $A$ is an injective left (or right) $A$-module. A $K$-algebra is called Frobenius if and only if $A\simeq A^\vee$ as left, (or, equivalently, as right) $A$-modules.
\end{definition}

It has been shown in \cite[Corollary 3.3]{V} that a weak Hopf algebra is necessarily quasi-Frobenius; this result being obtained as a consequence of the integral theory and a Hopf module isomorphism for weak Hopf modules. This is also an easy consequence of \cite[Proposition 2.3]{EO}, which provides a very short proof of this fact. Moreover, this proposition also implies the result for weak quasi-Hopf algebras. We use the tensor category philosophy to obtain results about the Frobenius property of weak Hopf algebras and weak quasi-Hopf algebras. We note that since $K$ is algebraically closed, a finite dimensional quasi-Frobenius algebra $A$ is Frobenius if and only if the socle and co-socle of any projective indecomposable $A$-module $P$ have the same dimension \cite[16.7-16.33]{L}. Indeed, the multiplicity of $P$ in ${}_AA$ is the same as the multiplicity of $S$, the co-socle
$P/J(A)P$ of $P$, in $A/J(A)$ - the semisimple residual algebra of $A$. This is exactly $dim_K(S)$ since $K$ is algebraically closed ($S$ is simple). The multiplicity of $P$ in $A^\vee$ is the same as the multiplicity of $Q=P^\vee$ as right $A$-module in $(A^\vee)^\vee=A_A$. Since $P$ is also injective, $Q=P^\vee$ is projective as right $A$-module, and its multiplicity in $A_{A}$ is the dimension of the co-socle $T=Q/QJ(A)$ of $Q$. But $T^\vee$ is then the socle of $P$ by duality, and we get that the multiplicity of $P$ in $A$ and $A^\vee$ are the same iff ${\rm dim}(T)={\rm dim}(S)$.

\vspace{.5cm}
%%%%%%%%%%%%%%%%%%%%%%%%%%%%%%%%%%%%
\section{Weak Hopf Algebras and Frobenius properties}

Let $H$ be a weak quasi-Hopf algebra, and $F:\Cc= {\rm Rep}(H)\rightarrow \mbox{\rm Bimod}(A)$ the associated forgetful functor. Let $(V_i)_{i\in I}$ be the simple objects in $\Cc$ and $P_i$ their respective covers. The vector space dimension of an $H$-module $M$ is $\dim_K(F(M))$ (this is different from the categorical or Frobenius-Perron dimension). Then $H=\bigoplus\limits_{i\in I}P_i^{\dim(F(V_i))}$. Also denote by $(S_j)_{j=1,\ldots,p}$ the simple right $A$-modules; then $S_{ij}=S_i^\vee\otimes_K S_j$ are the simple $A$-bimodules. Let $d_i=\dim_K(S_i)$. We recall that each object $X$ of $\Cc$ has an associated matrix $N_{Xj}^k$ defined by the left multiplication by $X$, where $N_{Xj}^k$ is the multiplicity of $V_k$ in the Jordan-H$\rm\ddot{o}$lder series of $X\otimes L_j$ in $\Cc$. As in \cite{EO}, Section 2.8, for each projective $P_i$, $i\in I$ let $D(i)\in I$ be such that $P_i^*\simeq P_{D(i)}$ (here $(-)^*$ denotes the categorical right dual). Also, there is an invertible object $V_\rho$ of $\Cc$ such that $P_{D(i)}=P_{{}^*i}\otimes V_\rho$ and $V_{D(i)}=V_{{}^*i}\otimes V_\rho={}^*V_i\otimes V_\rho$, where we convey $V_{{}^*i}={}^*V_i$. 

\begin{proposition}\label{p1}
Denote $[F(X):S_{ij}]$ the multiplicity of $S_{ij}$. Then
\begin{eqnarray*}
\dim_K(F(soc(P_k))) & = & \sum\limits_{i,j}[F(V_{D(k)}):S_{ij}]d_id_j\\
\dim_K(F(cosoc(P_k))) & = & \sum\limits_{i,j}[F({}^*V_k):S_{ij}]d_id_j
\end{eqnarray*}
\end{proposition}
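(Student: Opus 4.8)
The plan is to identify $soc(P_k)$ and $cosoc(P_k)$ with explicit simple objects of $\Cc$ and then apply $F$, using two elementary observations. First, since $A$ is semisimple every $A$-bimodule is semisimple, so for any $X\in\Cc$ one has $F(X)\cong\bigoplus_{i,j}S_{ij}^{\,[F(X):S_{ij}]}$ and hence $\dim_K F(X)=\sum_{i,j}[F(X):S_{ij}]\dim_K S_{ij}=\sum_{i,j}[F(X):S_{ij}]d_id_j$, because $\dim_K S_{ij}=\dim_K(S_i^\vee\otimes_K S_j)=d_id_j$. Thus the two asserted identities reduce to $\dim_K F(cosoc(P_k))=\dim_K F({}^*V_k)$ and $\dim_K F(soc(P_k))=\dim_K F(V_{D(k)})$, and it suffices to identify the simple objects $cosoc(P_k)$ and $soc(P_k)$ up to the value of the $F$-dimension.

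Second, I record the auxiliary fact that $\dim_K F({}^*V)=\dim_K F(V)$ for every $V\in\Cc$. In the rigid category $\mathrm{Bimod}(A)$ the monoidal unit is $A=\bigoplus_i S_{ii}$ and the left dual of a simple bimodule is ${}^*S_{ij}\cong S_{ji}$; hence taking left duals merely transposes the simple multiplicities $m_{ij}\mapsto m_{ji}$ of a bimodule and leaves $\sum_{i,j}m_{ij}d_id_j$ unchanged. Since $\Cc$ is rigid and $F$ carries the evaluation and coevaluation morphisms of $\Cc$ (via the quasi-tensor isomorphisms $\theta$ and $F(\mathbf1)\cong\mathbf1$) to a duality datum in $\mathrm{Bimod}(A)$, the object $F({}^*V)$ is a left dual of $F(V)$, so $F({}^*V)\cong{}^*F(V)$; with the previous sentence this gives $\dim_K F({}^*V)=\dim_K F(V)$.

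For the cosocle the identification is immediate: $P_k$ is the projective cover of $V_k$, so $cosoc(P_k)=P_k/\mathrm{rad}(P_k)=V_k$, and then $\dim_K F(cosoc(P_k))=\dim_K F(V_k)=\dim_K F({}^*V_k)=\sum_{i,j}[F({}^*V_k):S_{ij}]d_id_j$ by the auxiliary fact. For the socle I would use rigidity of $\Cc$ more seriously: the right-dual functor $(-)^*$ is a contravariant exact auto-anti-equivalence of $\Cc$, hence it interchanges socle and top, giving $(soc(P_k))^*\cong\mathrm{top}(P_k^*)$. By \cite{EO}, Section 2.8, $P_k^*\cong P_{D(k)}$, whose top is $V_{D(k)}$ since $P_{D(k)}$ is the projective cover of $V_{D(k)}$; therefore $(soc(P_k))^*\cong V_{D(k)}$, i.e. $soc(P_k)\cong{}^*V_{D(k)}$. (In particular $soc(P_k)$ is simple, consistent with $P_k$ being injective as $\Cc$ is the module category of a quasi-Frobenius algebra.) Applying the $F$-dimension and the auxiliary fact once more, $\dim_K F(soc(P_k))=\dim_K F({}^*V_{D(k)})=\dim_K F(V_{D(k)})=\sum_{i,j}[F(V_{D(k)}):S_{ij}]d_id_j$, as claimed.

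The step I expect to be the main obstacle is the auxiliary fact $\dim_K F({}^*V)=\dim_K F(V)$: although $\Cc$ is genuinely rigid and the socle/top swap under $(-)^*$ is a standard property of finite multitensor categories, $F$ is only a quasi-tensor functor, so one must check that it nevertheless sends left duals to left duals (equivalently, that the transported evaluation and coevaluation maps really exhibit $F({}^*V)$ as ${}^*F(V)$) despite not respecting the associativity constraints; this is exactly where the precise weak quasi-Hopf structure of $F$ must be used. Verifying the exactness of $(-)^*$ on $\Cc$ and invoking the identification $P_k^*\cong P_{D(k)}$ are the remaining ingredients, and these are either standard rigidity facts or already available from \cite{EO}.
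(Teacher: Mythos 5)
Your proof is correct and follows essentially the same route as the paper's: you identify $soc(P_k)\cong{}^*V_{D(k)}$ via the contravariant duality $(-)^*$ applied to $P_k^*\simeq P_{D(k)}$ (the paper phrases this as $soc(P_k)={}^*cosoc(P_k^*)$), use $\dim_K F({}^*V)=\dim_K F(V)$ because duals in ${\rm Bimod}(A)$ preserve dimension, and then expand into simple bimodule multiplicities. Your explicit flagging of why $F$ (only quasi-tensor) still sends duals to duals is a point the paper glosses over with the parenthetical remark that left and right duals coincide in ${\rm Bimod}(A)$, but the substance of the argument is identical.
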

\begin{proof}
We have $P_k^*\rightarrow cosoc(P_k^*)\rightarrow 0$, equivalently, by taking left duals, we get $0\rightarrow {}^*cosoc(P_k^*)\rightarrow {}^*(P_k^*)=P_k$ so $soc(P_k)={}^*cosoc(P_k^*)$. Also, $\dim_K(F({}^*X))=\dim_K(F(X^*))=\dim_K(F(X))^\vee=\dim_K(F(X))$ (in Bimod($A$) left and right duals are the same). Therefore
\begin{eqnarray*}
\dim_K(F(soc(P_k))) & = & \dim_K(F({}^*cosoc(P_k^*)))\,\,\,\,\,\,\,\,\,({\rm by\,duality})\\
& = & \dim_K(F({}^*cosoc(P_{D(k)})))\,\,\,\,\,\,\,\,\,(P_{D(k)}\simeq P_k^*)\\
& = & \dim_K(F({}^*V_{D(k)}))=\dim_K(F(V_{D(k)}))\\
& = & \sum\limits_{i,j}[F(V_{D(k)}):S_{ij}]d_id_j
\end{eqnarray*}
The second equality follows similarly.
\end{proof}

If $X,Y$ are objects of $\Cc$, then the matrix $M_X=[F(X):S_{ij}]_{i,j=1,\ldots,n}$ has integer coefficients, and moreover, $M_{X\otimes Y}=M_XM_Y$. Indeed, if $F(X)=\bigoplus\limits_{i,j}\alpha_{ij}S_{ij}$ and $F(Y)=\bigoplus\limits_{i,j}\beta_{ij}S_{ij}$, since $S_{ij}\otimes_AS_{kl}=\delta_{jk}S_{il}$ we get $F(X)\otimes_AF(Y)=\bigoplus\limits_{i,j,k,l}\delta_{jk}\alpha_{ij}\beta_{kl}S_{ij}S_{kl}=\bigoplus\limits_{i,l}(\sum\limits_{k=1}^p\alpha_{ik}\beta_{kl})S_{il}$. With this we have:

\begin{theorem}
Let $H$ be a finite dimensional weak quasi-Hopf algebra with the base algebra $A$. If the dimensions of the simple components of $A$ are all equal, then $H$ is a Frobenius algebra. In particular, this is true if the base algebra $A$ is commutative, so also when $H$ is a quasi-Hopf algebra. 
\end{theorem}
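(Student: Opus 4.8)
The plan is to reduce the Frobenius property of $H$ to a dimension count on indecomposable projectives and then to read that count off from Proposition \ref{p1}. Since $H$ is already known to be quasi-Frobenius and $K$ is algebraically closed, the criterion recalled in Section 1 says it suffices to verify, for every indecomposable projective $H$-module $P_k$, that $soc(P_k)$ and $cosoc(P_k)$ have the same $K$-dimension; because $F$ is the forgetful functor, these dimensions are exactly $\dim_K F(soc(P_k))$ and $\dim_K F(cosoc(P_k))$. First I would write down the two expressions supplied by Proposition \ref{p1},
\[
\dim_K F(soc(P_k))=\sum_{i,j}[F(V_{D(k)}):S_{ij}]\,d_id_j,\qquad
\dim_K F(cosoc(P_k))=\sum_{i,j}[F({}^*V_k):S_{ij}]\,d_id_j,
\]
and compare them.

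The point of the hypothesis is that when all the $d_i$ are equal, say $d_i=d$, the weight $d_id_j=d^2$ becomes constant. Each of the two sums then equals $d^2$ times the total $A$-bimodule length $\sum_{i,j}[F(X):S_{ij}]$ of the relevant object $X$ (the number of Jordan--H\"older factors of $F(X)$). Hence the whole problem collapses to the single combinatorial identity
\[
\sum_{i,j}[F(V_{D(k)}):S_{ij}]=\sum_{i,j}[F({}^*V_k):S_{ij}],
\]
i.e. $F(V_{D(k)})$ and $F({}^*V_k)$ have the same number of composition factors.

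To establish this I would use the relation $V_{D(k)}={}^*V_k\otimes V_\rho$ together with the multiplicativity $M_{X\otimes Y}=M_XM_Y$ of the multiplicity matrices recorded just before the theorem, so that $M_{V_{D(k)}}=M_{{}^*V_k}M_{V_\rho}$. The remaining input is that $M_{V_\rho}$ is a permutation matrix: indeed $V_\rho$ is invertible, so $M_{V_\rho}M_{V_\rho^{-1}}=M_{\mathbf 1}$ is the identity matrix, and a nonnegative integer matrix with a nonnegative integer inverse must be a permutation matrix. Right multiplication by a permutation matrix merely permutes the columns of $M_{{}^*V_k}$, and therefore leaves the sum of all its entries unchanged; this gives exactly the identity above, and the theorem follows. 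The final assertions are immediate special cases: a commutative semisimple $A$ over an algebraically closed field is a product of copies of $K$, so every $d_i=1$, while a quasi-Hopf algebra is the case $A=K$.

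The step I expect to be the real obstacle, and the place where the hypothesis is genuinely used, is the interaction between the block permutation induced by $V_\rho$ and the weights $d_id_j$. Without the equal-dimension assumption $M_{V_\rho}$ still permutes blocks, but the permutation need not preserve the numbers $d_i$, so the two \emph{weighted} sums can differ even though the unweighted lengths always agree; recognizing that equal block dimensions is precisely the condition that renders this permutation harmless is the crux, and it is exactly why the converse construction later in the paper can produce a non-Frobenius example when the $d_i$ differ.
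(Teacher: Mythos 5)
Your proposal is correct and follows essentially the same route as the paper's proof: reduce to comparing $\dim_K F(soc(P_k))$ and $\dim_K F(cosoc(P_k))$ via Proposition \ref{p1}, use $V_{D(k)}={}^*V_k\otimes V_\rho$ with multiplicativity of the matrices $M_X$, and conclude from $M_{V_\rho}$ being a permutation matrix that the entry sums agree when all $d_i$ are equal. Your phrasing of the permutation-matrix step (a \emph{nonnegative} integer matrix with nonnegative integer inverse is a permutation matrix) is in fact slightly more careful than the paper's, which mentions only integrality.
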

\begin{proof}
Since $V_{D(k)}={}^*V_k\otimes V_\rho$, $M_{V_{D(k)}}=M_{{}^*V_k}\cdot M_{V_\rho}$. But since $V_\rho$ is invertible, $M_{V_\rho^{\,}}\cdot M_{V_\rho^{-1}}=M_{V_\rho^{-1}}\cdot M_{V_\rho^{\,}}=M_{V_\rho\otimes V_\rho^{-1}}=M_{\mathbf 1}={\rm Id}$, so $M_{V_\rho}$ is a permutation matrix, since it has integer coefficients and its inverse $M_{V_\rho^{-1}}$ has integer coefficients too. So the columns of, and the elements of $M_{V_{D(k)}}=[F(V_{D(k)}):S_{ij}]_{i,j=1,p}$ are a permutation of the columns of, and respectively the elements of $M_{{}^*V_k}=[F({}^*V_k):S_{ij}]$. Thus, if $d=d_i=d_j$ for all $i,j$ (for commutative $A$, $d=1$), using the previous Proposition, one has $$\dim_K(F(soc(P_k)))=d^2\sum\limits_{i,j}[F(V_{D(k)}):S_{ij}]=d^2\sum\limits_{i,j}[F({}^*V_k):S_{ij}]=\dim_K(F(cosoc(P_k)))$$
and so $H$ is Frobenius.
\end{proof}

\begin{remark}
For the above proposition, we do not need that the characteristic of $K$ is $0$, since for any weak quasi-Hopf algebra, one can build a forgetful functor $F:{\rm Rep}(H)\rightarrow \mbox{\rm Bimod}(A)$.
\end{remark}

\begin{example}
\begin{rm}
Let $B$ be Taft's Hopf algebra of dimension $p^2$, with generators $g,x$ with $g^p=1$, $x^p=0$, $xg=\lambda gx$ with $\lambda$ a primitive $p$'th root of unity, and comultiplication $\Delta(g)=g\otimes g$, $\Delta(x)=g\otimes x+x\otimes 1$, counit $\varepsilon(g)=1$, $\varepsilon(x)=0$ and antipode $S(g)=g^{-1}$, $S(x)=-g^{-1}x$. Let $x_k^{(s)}=\sum\limits_{i=0}^{p-1}\lambda^{-ik}g^ix^s$, where we agree to write all indices modulo $p$. Note that 
\begin{eqnarray}
g\cdot x_k^{(s)} & = & \lambda^kx_k^{(s)} \label{eq1}\\
x\cdot x_k^{(s)} & = & x_{k-1}^{(s+1)} \label{eq2}
\end{eqnarray}
Denote $V_k$ the 1-dimensional $B$-module $K$ with structure $x\cdot \alpha=0$ and $g\cdot \alpha=\lambda^k\alpha$. These form a set of representatives for simple left $B$-modules. \\
Let $I_k^i=B\cdot x_{k}^{i}={}_K<x_{k+1}^{(p-1)},x_{k+2}^{p-2},\dots,x_{k+i+1}^{(p-i)}>$ ($i=1,\dots,p$) - the $K$-subspace spanned by these $i$ elements, which are linearly independent (since they contain different powers of $x$). There is a Jordan-H\"older series of $I_k^k$, $0=\subset I_k^1\subseteq I_k^2\subseteq\dots\subseteq I_k^{k-1}\subseteq I_k^k$ and the terms of these series are $I_k^i/I_k^{i-1}\simeq V_{k+i}$ by (\ref{eq1}). We have $J(B)=B\cdot x$ - the Jacobson radical of $B$. Then $J(B)\cdot I_k^k=I_k^{k-1}$ is the Jacobson radical of $I_k^k$, and so we have a superfluous morphism $I_k^k\rightarrow V_k\rightarrow 0$ (i.e. an epimorphism whose kernel is small). If $P_k$ is the projective cover of $V_k$, it follows that $P_k$ projects onto $I_k^k$, so it has dimension at least $p$. Since each projective indecomposable $P_k$ occurs in a decomposition of ${}_BB$, we have that $p^2=\dim_K(B)\geq\sum\limits_{i=0}^{p-1}\dim_K(P_i)\geq p\cdot p$, therefore $\dim(P_k)=p$ for all $k$ and thus $P_k=I_k^k$. In fact, since $I_k^{i-1}=J(B)\cdot I_k^i$, we can see that each $P_k$ is a chain module (the $I_k^i$ are the only submodules). However, we $P_k$ is also injective and has simple socle, and $soc(P_k)=I_k^1\simeq V_{k+1}$. We now build a tensor functor $F:{\rm Rep}(B)\rightarrow {\rm Bimod}(A)$ in several steps.
%$$\begin{diagram}
%{\rm Rep}(B) &&  \rTo^{F_1}&& {\rm Rep}(\ZZ/p) \\
%\dTo^{F} && && \dTo_{F_2}    \\
% {\rm Bimod}(A) &&  \lTo_{F_3}   && {\rm Bimod}(K[\ZZ/p])
%\end{diagram}$$
$$\xymatrix{
{\rm Rep}(B) \ar[d]^{F} \ar[r]^{F_1} &  {\rm Rep}(\ZZ/p)\ar[d]_{F_2} \\
{\rm Bimod}(A) & {\rm Bimod}(K[\ZZ/p])\ar[l]_{F_3}
}$$
First, let $F_1:{\rm Rep}(B)\rightarrow {\rm Rep}(\ZZ/p)$ be the forgetful functor, given by the inclusion of Hopf algebras  $<1,g,...,g^{p-1}>\simeq K[\ZZ/p]\hookrightarrow B$. It is easily checked that this is a tensor functor. Let $F_2:K[\ZZ/p]\!-\!\mbox{\rm mod}={\rm Rep}(\ZZ/p) \rightarrow {\rm Bimod}(K[\ZZ/p])$, $F_2(V_k)=\bigoplus\limits_{i+j=k}V_i^*\otimes_K V_j=\bigoplus\limits_i V_{-i}\otimes_K(V_{-i}\otimes V_k)$ where the second tensor represents the tensor product in ${\rm Rep}(\ZZ/p)$, and $(-)^*$ is the dual in ${\rm Rep}(\ZZ/p)$. On morphisms $f:X\rightarrow Y$, we have $F_2(f)=\bigoplus\limits_{i=0}^{p-1}1_{V_i^*}\otimes_K(1_{V_{-i}}\otimes f)$. It can be easily noted that $F_2$ is well defined (the action of $K[\ZZ/p]\otimes K[\ZZ/p]$ on $V_i\otimes_KV_j$ is on components) and that $F_2$ is a tensor functor: indeed, it is enough to check this on simple objects $F_2(V_a)\otimes_{K[\ZZ/p]}F_2(V_b)=(\bigoplus\limits_{i+j=a}V_{-i}\otimes V_j)\otimes_{K[\ZZ/p]}(\bigoplus\limits_{k+l=b}V_{-k}\otimes V_l)=\bigoplus\limits_{i,j,k,l;i+j=a,k+l=b}\delta_{j,-k}V_{-i}\otimes V_l=\bigoplus\limits_{i+l=a+b}V_{-i}\otimes V_l=F_2(V_a\otimes V_b)$.\\ Note that this functor can also be seen as the left adjoint of the functor 
$$G:{\rm Bimod(\ZZ/p)={\rm Rep}(\ZZ/p\times \ZZ/p)}\longrightarrow K[\ZZ/p]\!-\!\mbox{\rm mod}={\rm Rep}(\ZZ/p)$$ which is induced by the diagonal map $K[Z/p] \rightarrow K[Z/p]\otimes K[Z/p]$ coming by the group morphism $\ZZ/p\ni i\mapsto (-i,i)\in \ZZ/p\times\ZZ/p$. Indeed, we can easily see that $G(V_a^*\otimes_KV_b)=V_{a+b}$, so $\Hom_{{\rm Bimod}(\ZZ/p)}(F_2(V_k),V_a^*\otimes_KV_b)=\Hom_{{\rm Bimod}(\ZZ/p)}(\bigoplus\limits_{i+j=k}V_i^*\otimes_KV_j,V_a^*\otimes_KV_b)=\bigoplus\limits_{i+j=k}\delta_{i,a}\delta_{j,b}K=\delta_{a+b,k}K=\Hom_{K[\ZZ/p]-{\rm mod}}(V_k,V_{a+b})=\Hom_{K[\ZZ/p]-{\rm mod}}(V_k,G_{V_a^*\otimes_KV_b})$.\\
 Finally, let $d_1,\dots,d_p$ be positive integers and $A=\bigoplus\limits_{i=1}^pM_{d_i}(K)$ and $F_3: {\rm Bimod}(\ZZ/p)\rightarrow {\rm Bimod}(A)$, $F_3(V_i^*\otimes V_j)=S_i^\vee\otimes S_j=S_{ij}$ (with $S_i$'s as before). This is actually an equivalence of tensor categories. % $F_3(-)=A\otimes_KA^{op}\otimes_K(-)$. 
 Let $F=F_3\circ F_2\circ F_1:{\rm Rep}(B)\rightarrow {\rm Bimod}(A)$, which is a tensor functor. By the above, using Tannakian reconstruction, this corresponds to a weak Hopf algebra $H= \mbox{\rm Taft}(d_1,\dots,d_n)$ with base $A$ and ${\rm Rep}(H)\simeq {\rm Rep}(B)$, and ``forgetful'' functor the $F$ above. This holds in characteristic different from $0$, whenever none of the $d_i$ are divisible by the characteristic of $K$.
\end{rm}
\end{example}

\begin{proposition}
With the  notations above, the weak Hopf algebra $H$ is a Frobenius algebra if and only if $d_1,\dots,d_p$ are all equal. Also, the algebra $H$ has dimension $(\sum\limits_id_i)^4$. Thus, if the $d_i$'s are not all equal, $H$ is a weak Hopf algebra which is not a Frobenius algebra.
\end{proposition}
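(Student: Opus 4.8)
The plan is to transport the Frobenius question through the tensor equivalence ${\rm Rep}(H)\simeq{\rm Rep}(B)$ and the quasi-Frobenius criterion recalled in Section~1, reducing it to a purely combinatorial statement about the integers $d_1,\dots,d_p$, which I then settle by a cyclic-convolution (discrete Fourier) argument.

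First I would compute $F$ on simples. Since $F=F_3\circ F_2\circ F_1$ with $F_1(V_k)=V_k$, $F_2(V_k)=\bigoplus_{i+j=k}V_i^*\otimes_K V_j$ and $F_3(V_i^*\otimes V_j)=S_{ij}$, one obtains $F(V_k)=\bigoplus_{i+j\equiv k}S_{ij}$, the sum being over $i+j=k$ in $\ZZ/p$; hence
$$c_k:=\dim_K F(V_k)=\sum_{i}d_i\,d_{k-i},$$
the cyclic self-convolution of $(d_i)_{i\in\ZZ/p}$. Because $K$ is algebraically closed and $H$ is quasi-Frobenius, $H$ is Frobenius precisely when $\dim_K soc(P)=\dim_K cosoc(P)$ for every projective indecomposable $P$. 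Transporting along the equivalence and using that the $K$-dimension of an $H$-module $M$ is $\dim_K F(M)$, this becomes $\dim_K F(soc\,P_k)=\dim_K F(cosoc\,P_k)$ for all $k$. From the explicit Taft computation in the Example, $P_k$ is uniserial with $soc(P_k)\simeq V_{k+1}$ and $cosoc(P_k)\simeq V_k$, so the Frobenius condition reads $c_{k+1}=c_k$ for all $k$; that is, $k\mapsto c_k$ is constant on $\ZZ/p$.

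The heart of the matter, and the step I expect to be the main obstacle, is the equivalence: the convolution $c_k=\sum_i d_i d_{k-i}$ is constant in $k$ if and only if all the $d_i$ are equal. The forward direction is immediate ($d_i\equiv d$ gives $c_k=pd^2$). For the converse I would pass to the discrete Fourier transform: fixing a primitive $p$-th root of unity $\omega$ and putting $\hat d(m)=\sum_i d_i\omega^{im}$, the convolution theorem gives $\sum_k c_k\omega^{mk}=\hat d(m)^2$. If $c_k\equiv c$ then $\sum_k c_k\omega^{mk}=cp\,\delta_{m,0}$, so $\hat d(m)^2=0$, whence $\hat d(m)=0$, for every $m\neq0$; Fourier inversion then forces $d_i=\frac{1}{p}\hat d(0)$, a constant. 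This proves the ``Frobenius iff all $d_i$ equal'' assertion.

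For the dimension I would use $H=\bigoplus_k P_k^{\dim_K F(V_k)}$, so that $\dim_K H=\sum_k c_k\cdot\dim_K F(P_k)$. Each $P_k$, being $p$-dimensional uniserial with composition factors $V_{k+1},\dots,V_{k+p}$, has every simple $V_0,\dots,V_{p-1}$ as a composition factor exactly once; by additivity of $\dim_K F$ on composition series, $\dim_K F(P_k)=\sum_m c_m=(\sum_i d_i)^2$ for every $k$, using $\sum_m c_m=\sum_{i,j}d_id_j$. Hence $\dim_K H=(\sum_i d_i)^2\sum_k c_k=(\sum_i d_i)^4$. The final sentence of the statement is then the contrapositive of the equivalence above combined with this dimension count.
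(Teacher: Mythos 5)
Your proposal is correct and follows essentially the same route as the paper: reduce to the quasi-Frobenius criterion $\dim_K F(soc\,P_k)=\dim_K F(cosoc\,P_k)$, identify these dimensions with the cyclic convolutions $c_k=\sum_i d_id_{k-i}$, and settle the combinatorics with roots of unity. Your discrete-Fourier/inversion argument is just a repackaging of the paper's version, which evaluates the generating polynomial $t(x)=\sum_k d_kx^k$ at a nontrivial $p$-th root of unity to get $t(\omega)^2=\sum_k c_k\omega^k=0$ and concludes via divisibility by $1+x+\cdots+x^{p-1}$; the dimension count $\dim_K H=(\sum_i d_i)^4$ is also the same.
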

\begin{proof}
By the considerations above we have $\dim_K(F(soc(P_k)))=\dim_K(F(V_{k+1}))=$ \newline
$\dim_K(\bigoplus\limits_{i+j=k+1}S_i^\vee\otimes S_j)=\sum\limits_{i+j=k+1}d_id_j$ and also $\dim_K(F(cosoc(P_k)))=\dim_K(F(V_k))=\sum\limits_{i+j=k}d_id_j$. $H$ is Frobenius if and only if these two numbers are equal for all $k$. Let $\omega$ be a root of order $p$ of unity different from 1, and $t(x)=\sum\limits_{k=0}^{p-1}d_kx^k$. Then $t(\omega)^2=\sum\limits_{i,j}d_id_j\omega^{i+j}=\sum\limits_{k=0}^{p-1}\sum\limits_{i+j=k}d_id_j\omega^k=(\sum\limits_id_id_{-i})\cdot(\sum\limits_{k}\omega^k)=0$ (the indices are always considered mod $p$). Therefore, $t$ is divisible by the polynomial $\sum\limits_{k=0}^{p-1}x^p$ and so they are a multiple of each other. This implies that all $d_i$ are equal. \\
Since every projective indecomposable $P_k$ has each simple object occurring exactly once in any of its Jordan-H\"older series, $\dim_K(H)=\sum\limits_{k}\dim_K(F(P_k))
\cdot\dim_K(F(soc(P_k)))$
$=\sum\limits_k(\sum\limits_i\dim_K(F(V_i)))\cdot\dim_K(F(V_k))
=(\sum\limits_k\dim_k(F(V_k)))^2=
(\sum\limits_k\sum\limits_{i+j=k}d_id_j)^2$
$=(\sum\limits_kd_k)^4$.
\end{proof}

However, with the observation above on the characterization of Frobenius algebras, there is a certain categorical statement which could be interpreted as the analogue of the property of (quasi) Hopf algebras of being Frobenius:

\begin{proposition}
\label{prop-cat}
If $\Cc$ is a finite tensor category, then $d_+(soc(P_k))=d_+(cosoc(P_k))$, where $d_+$ represents the Frobenius-Perron dimension in $\Cc$.
\end{proposition}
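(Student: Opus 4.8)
The plan is to run exactly the argument of the preceding Theorem, but with the vector-space dimension $\dim_K(F(-))$ replaced throughout by the Frobenius--Perron dimension $d_+$; the key point is that $d_+$ is a genuine ring homomorphism, which is precisely what makes the equal-block-dimension hypothesis unnecessary here. First I would identify the socle and cosocle of $P_k$ in terms of simple objects. Since $P_k$ is the projective cover of $V_k$, one has $cosoc(P_k)=P_k/J(\Cc)P_k=V_k$. For the socle I would reuse the duality computation already carried out in the proof of Proposition \ref{p1}: taking left duals of $P_k^*\rightarrow cosoc(P_k^*)\rightarrow 0$ gives $soc(P_k)={}^*cosoc(P_k^*)={}^*cosoc(P_{D(k)})={}^*V_{D(k)}$, where the last equality uses $P_k^*\simeq P_{D(k)}$ and that $P_{D(k)}$ is the projective cover of $V_{D(k)}$. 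Both $soc(P_k)$ and $cosoc(P_k)$ are thus simple objects.

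Next I would invoke the standard properties of the Frobenius--Perron dimension on a finite (multi)tensor category (Etingof--Ostrik): $d_+$ descends to a ring homomorphism $\mathrm{Gr}(\Cc)\to\RR$, so it is additive on exact sequences and multiplicative on tensor products; it is invariant under taking left or right duals, $d_+({}^*X)=d_+(X^*)=d_+(X)$; and any invertible object has $d_+=1$. Applying these to the objects above, together with the identity $V_{D(k)}={}^*V_k\otimes V_\rho$ stated before Proposition \ref{p1}, gives
$$d_+(soc(P_k))=d_+({}^*V_{D(k)})=d_+(V_{D(k)})=d_+({}^*V_k\otimes V_\rho)=d_+({}^*V_k)\,d_+(V_\rho)=d_+(V_k)=d_+(cosoc(P_k)),$$
since $V_\rho$ is invertible and hence $d_+(V_\rho)=1$.

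I do not expect a genuine obstacle in this chain of equalities; the real content is conceptual and lies in explaining why the categorical statement holds unconditionally while the vector-space version needed all $d_i$ equal. In the Theorem the functional $X\mapsto\dim_K(F(X))=\sum_{i,j}[F(X):S_{ij}]d_id_j$ is \emph{not} multiplicative, and tensoring by $V_\rho$ only permutes the entries $[F(X):S_{ij}]$ through the permutation matrix $M_{V_\rho}$; this permutation preserves the weighted sum $\sum_{i,j}(-)\,d_id_j$ exactly when the $d_i$ are all equal. By contrast $d_+$ is a bona fide ring homomorphism, so tensoring by the dimension-one object $V_\rho$ automatically leaves it fixed, and the socle/cosocle balance follows with no constraint on the blocks. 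Recognizing that $d_+$ is the correct categorical dimension making this balance hold is the main idea; the only external facts I would cite rather than reprove are the duality-invariance and the normalization $d_+=1$ on invertibles of the Frobenius--Perron dimension.
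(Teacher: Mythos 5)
Your proposal is correct and follows essentially the same route as the paper: identify $soc(P_k)={}^*V_{D(k)}$ via the duality argument of Proposition~\ref{p1}, then exploit that the Frobenius--Perron dimension is dual-invariant, multiplicative, and equal to $1$ on the invertible object $V_\rho$, yielding exactly the chain of equalities the paper writes down. Your added discussion contrasting $d_+$ with the non-multiplicative functional $\dim_K(F(-))$ is a nice conceptual gloss but not a mathematical divergence from the paper's argument.
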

\begin{proof}
As in Proposition \ref{p1}, $soc(P_k)={}^*L_{D(k)}$, so we compute  $d_+(soc(P_k))=d_+({}^*L_{D(k)})$
$=d_+(L_{D(k)})=d_+({}^*L_k\otimes L_\rho)=d_+({}^*L_k)d_+(L_\rho)=d_+(L_k)=d_+(cosoc(P_k))$.
\end{proof}

Note that proposition~\ref{prop-cat}  implies the Hausser-Nill result that quasi-Hopf algebras are Frobenius algebras. \\
One can ask whether a weak (quasi-) Hopf algebra $H$ is perhaps Frobenius ``over its base algebra'' $A$; this should naturally be formulated in the terminology of Frobenius extensions. If $\varphi:A\rightarrow B$ is a morphism (extension) of rings, it is called Frobenius extension if the forgetful (restriction of scalars) functor ${}_B\Mm\rightarrow {}_A\Mm$ has isomorphic left and right adjoints (see \cite{K} for details). We have the extension of algebras $A\hookrightarrow H$, and one can ask the question whether this is Frobenius. Since $A$ is semisimple, the unit $k\rightarrow A$ is a Frobenius extension, and then if $A\hookrightarrow H$ is Frobenius, $k\hookrightarrow H$ would be Frobenius (by transitivity of Frobenius extensions). But, as seen above, this is not always the case.
This provides an example of a (finite
projective) weak Hopf-Galois extension which is not a Frobenius extension, since $H$ is such
a Galois extension of $A$  \cite[2.7]{CDG}.  \\
 (Similarly, the extensions $A\hookrightarrow H$ and $A\otimes A^{op}= A_L\otimes A_R\hookrightarrow H$ are not twisted or $\beta$-Frobenius extensions, although they are
QF extensions since the base algebras are separable.)

%since we always have a %morphism of algebras $A\otimes %A^{op}\rightarrow A_L\otimes A_R\subseteq %H$, we may ask whether this is Frobenius; %but since $A\otimes A^{op}$ is semisimple, %this statement is equivalent to the %composition $(k\rightarrow %H)=(k\rightarrow %A\otimes A^{op}\rightarrow H)$ being %Frobenius, equivalently, $H$ is Frobenius %as an algebra, which is not always true.)

\vspace{.5cm}

\begin{center}
\sc Acknowledgment
\end{center}

The first author wishes to thank Pavel Etingof from MIT for his great support and the excellent time spent at MIT during the fall semester of 2008. This paper would not have been possible without his contribution, as an important part of the ideas contained here are actually to his credit. The authors also thank Kornel Szlach\'anyi and colleagues in Budapest for comments about this paper. 

\vspace{.5cm}

\vspace*{3mm} 
\begin{flushright}
\begin{minipage}{148mm}\sc\footnotesize

Miodrag Cristian Iovanov\\
University of Bucharest, Faculty of Mathematics, Str.
Academiei 14,
RO-70109, Bucharest, Romania \&\\
State University of New York - Buffalo, 244 Mathematics Building, Buffalo NY, 14260-2900, USA\\
{\it E--mail address}: {\tt
yovanov@gmail.com, e-mail@yovanov.net}\vspace*{3mm}

Lars Kadison \\
Department of Mathematics, 
University of Pennsylvania, 
David Rittenhouse Laboratory,
209 S. 33rd St., Philadelphia,
PA 19104, USA \\
{\it E--mail address}: {\tt lkadison@math.upenn.edu}

\end{minipage}
\end{flushright}
\end{document}